\newtheoremstyle{mystyle}
  {}
  {}
  {\itshape}
  {}
  {\bfseries}
  {.}
  { }
  {\thmname{#1}\thmnumber{ #2}\thmnote{ (#3)}}
\theoremstyle{mystyle}
\newtheorem*{Thm*}{Theorem}
\newtheorem{Thm}{Theorem}
\newtheorem{Lem}[Thm]{Lemma}
\newtheorem{Prop}[Thm]{Proposition}
\newtheorem*{Conj*}{Conjecture}
\newtheorem{Que}[Thm]{Question}
\theoremstyle{definition}
\newtheorem{Ex}[Thm]{Example}
\theoremstyle{remark}
\newtheorem{Rmk}[Thm]{Remark}
\declaretheoremstyle[%
  spaceabove=3pt,
  spacebelow=10pt,
  headfont=\normalfont\itshape,%
  postheadspace=.5em,%
  qed=\qedsymbol%
]{mystyle2} 
\declaretheorem[name={Proof},style=mystyle2,unnumbered,
]{pf}
\newcommand{\Z}{\mathbb{Z}}
\newcommand{\Q}{\mathbb{Q}}
\newcommand{\CP}{\mathbb{CP}}
\newcommand{\RP}{\mathbb{RP}}
\DeclareMathOperator{\rank}{\mathrm{rank}}
\title{Slice genus bound in $DTS^2$ from $s$-invariant}
\author{Qiuyu Ren}
\address{Department of Mathematics, University of California, Berkeley, Berkeley, CA 94709, USA}
\email{qiuyu\_ren@berkeley.edu}
\begin{document}

\begin{abstract}
We prove a recent conjecture of Manolescu-Willis which states that the $s$-invariant of a knot in $\RP^3$ (as defined by them) gives a lower bound on its null-homologous slice genus in the unit disk bundle of $TS^2$. We also conjecture a lower bound in the more general case where the slice surface is not necessarily null-homologous, and give its proof in some special cases.
\end{abstract}

\maketitle

\section{Introduction}
Rasmussen \cite{rasmussen2010khovanov} famously defined the $s$-invariant for knots in $S^3$ using Khovanov homology theory \cite{khovanov2000categorification}, and proved that for a knot $K$ in $S^3$, 
\begin{equation}\label{eq:g_4}
2g_4(K)\ge|s(K)|,
\end{equation}
where $g_4(K)$ is the slice genus of $K$, which can be defined as the minimal genus of an orientable cobordism (in $S^3\times[0,1]$) from $K$ to the unknot.

Analogously, Manolescu-Marengon-Sarkar-Willis \cite{manolescu2023generalization} and Manolescu-Willis \cite{manolescu2023rasmussen} defined $s$-invariants ($\Z$-valued, like the usual $s$-invariant) for null-homologous knots in $S^1\times S^2$ and for all knots in $\RP^3$, respectively, and proved the same inequality \eqref{eq:g_4} in these settings. Here the slice genus $g_4(K)$ for $K$ is still defined as the minimal genus of an orientable cobordism from $K$ to an unknot (there are two unknots in $\RP^3$, one null-homologous and one not).

For an integer $d$, let $D(d)$ denote the $D^2$-bundle over $S^2$ with euler number $d$. Thus $D(0)=D^2\times S^2$ with boundary $S^1\times S^2$; $D(1)=\CP^2\backslash B^4$ with boundary $S^3$; $D(2)=DTS^2$, the unit disk bundle of the tangent bundle of $S^2$, with boundary $\RP^3$. For a null-homologous properly embedded orientable connected surface $\Sigma\subset D(d)$ with boundary a knot $K\subset\partial D(d)$, $d=0,1,2$, the genus bound
\begin{equation}\label{eq:g_D}
2g(\Sigma)\ge-s(K)
\end{equation}
was proved for $d=0,1$ \cite[Theorem~1.15,Corollary~1.9]{manolescu2023generalization} and conjectured for $d=2$ \cite[Conjecture~6.9]{manolescu2023rasmussen}. We prove this conjecture.

\begin{Thm*}
If $(\Sigma,K)\subset(DTS^2,\RP^3)$ is a null-homologous properly embedded orientable connected surface that bounds a knot $K\subset\RP^3$, then $$2g(\Sigma)\ge-s(K).$$
\end{Thm*}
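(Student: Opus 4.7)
My plan is to use the handle decomposition $DTS^2 = D^4 \cup_U h^2$, with $U \subset S^3 = \partial D^4$ the $+2$-framed unknot, to reduce the problem to a multi-component slice genus question in $D^4$, where the link-theoretic form of Rasmussen's $s$-invariant bound is available. Combining this with a comparison between the Manolescu--Willis $\RP^3$ $s$-invariant of $K$ and the ordinary $S^3$ $s$-invariant of a suitable ``meridional completion'' of $K$ should then produce the claimed bound.

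\textbf{Tubing off the $2$-handle.} Put $\Sigma$ in general position with respect to the cocore disk $D^2_c := \{0\}\times D^2 \subset h^2$. Since $D^2_c$ generates $H_2(DTS^2,\partial DTS^2)\cong\Z$, the null-homological hypothesis forces the algebraic intersection $\Sigma\cdot D^2_c$ to vanish, so after a small isotopy $\Sigma$ meets $D^2_c$ transversely in equal numbers $n$ of positive and negative points. Removing small open disks around these points yields a properly embedded surface $\Sigma_0\subset D^4$ with $g(\Sigma_0)=g(\Sigma)$ and $\chi(\Sigma_0)=\chi(\Sigma)-2n$, bounding a link $L=K'\sqcup\mu_1^+\sqcup\cdots\sqcup\mu_n^+\sqcup\mu_1^-\sqcup\cdots\sqcup\mu_n^-\subset S^3$, where $K'$ is the presentation of $K$ in the surgery diagram and the $\mu_i^{\pm}$ are oppositely oriented meridians of $U$. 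Applying the link version of Rasmussen's $D^4$ slice inequality to $\Sigma_0$ then yields a bound of the form $-s(L)\le 2g(\Sigma)+2n+c$, with $c$ depending only on the component count of $L$.

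\textbf{Identifying the $s$-invariants.} The decisive step is to match $s(L)\in\Z$, computed via Khovanov--Lee in $S^3$, with the Manolescu--Willis $s(K)\in\Z$ up to an explicit shift that precisely cancels the $2n$-contribution from the tubing. A natural route is through the surgery/skein exact triangle underlying the Manolescu--Willis construction: their Lee/Bar-Natan-type complex for $K\subset\RP^3$ should appear as a controlled deformation of the Khovanov--Lee complex of $K'\cup U\subset S^3$, and each canceling pair of added meridians on $U$ should shift the resulting $s$-invariant by $-1$. Tracking the canonical generators that compute $s$ through this deformation should give an identity of the form $s(L)=s(K)-2n$. Substituting into the previous inequality and cancelling the $2n$ terms recovers the target bound $2g(\Sigma)\ge -s(K)$.

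\textbf{Main obstacle.} The heart of the argument, and the part I expect to be most technical, is this last identification. The Manolescu--Willis $\RP^3$ invariant is defined via a nontrivial deformation of Khovanov homology tied to the $+2$-framing of $U$, and it is not obviously compatible with the operation of inserting a canceling pair of meridians on that unknot. Proving the meridian shift formula seems to require an explicit analysis of the canonical Lee generators in the Manolescu--Willis spectral sequence, or equivalently the construction of a cobordism map between the $S^3$ Lee complex of $K'\cup U\cup(\text{meridians})$ and the $\RP^3$ Lee complex of $K$ whose grading behavior can be computed. Once the one-pair shift is in hand, an inductive argument on $n$ should close the proof.
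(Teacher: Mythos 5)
Your proposal takes a genuinely different route from the paper, but it has a gap precisely at the step you flag as the ``main obstacle,'' and that gap is not a technicality---it is where all of the content lives.

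The paper's reduction does not pass through $D^4$ at all. Instead of removing a tubular neighborhood of the cocore disk of the $2$-handle (which would land you in $D^4$ with boundary $S^3$), the paper removes a tubular neighborhood of the \emph{core $2$-sphere} $S\subset DTS^2$. What remains is a collar $\RP^3\times[0,1]$, and the truncated surface $\Sigma_0$ becomes a cobordism \emph{inside $\RP^3\times[0,1]$} from the link $\overline{T(2;p,q)}$ of $S^1$-fibers of $\RP^3\to S^2$ to $K$. This keeps the entire argument inside the Manolescu--Willis $\RP^3$ theory: one applies their cobordism inequality \eqref{eq:s_cob} directly, reducing the theorem to the single computation $s(T(2;p,p))=-2p+1$, which is then proved by a skein exact triangle and an adaptation of Sto\v{s}i\'c's torus-link argument (Section~\ref{sec:s}). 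At no point does the paper need to compare an $\RP^3$ $s$-invariant with an $S^3$ $s$-invariant.

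By contrast, your approach requires an identity of the form $s_{S^3}(L)=s_{\RP^3}(K)-2n$ relating the $S^3$ $s$-invariant of the surgery-diagram link $L=K'\cup(\text{meridians})$ to the Manolescu--Willis $\RP^3$ $s$-invariant of $K$. Nothing like this is available in the literature, and your sketch offers no argument for it---only the hope that the $\RP^3$ Lee complex ``should appear as a controlled deformation'' of the $S^3$ Lee complex of $K'\cup U$. This is not how the Manolescu--Willis theory is built; their complex is defined intrinsically from diagrams in $\RP^2$, not as a deformation or filtered perturbation of the Khovanov--Lee complex of a surgery presentation. Moreover an equality is implausible as stated: $L$ depends on the isotopy class of $K'$ in $S^3\setminus U$ and on the positions of the inserted meridians, both of which can be varied without changing $K\subset\RP^3$, while the right-hand side is fixed. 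At best one could hope for an inequality, and even that would require a new comparison theorem between the two theories---something at least as hard as, and quite different in flavor from, the direct $s(T(2;p,p))$ computation the paper actually carries out. In short: your first paragraph (tubing off) is sound topology, but the proof cannot close without the $s$-invariant identification, and that identification is neither proven nor, in the form you state it, true.
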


\begin{Rmk}
By reversing the orientation of $D(d)$, \eqref{eq:g_D} for $d=0,1,2$ implies $$2g(\Sigma)\ge s(K)$$ for $d=0,-1,-2$. Since a cobordism in $\partial D(d)\times[0,1]$ from a null-homologous $K\subset\partial D(d)$ to an unknot can be capped off to become a slice surface in $D(\pm d)$, \eqref{eq:g_D} for $d=0,1,2$ can be considered as refinements of \eqref{eq:g_4} for null-homologous knots $K$ in $S^1\times S^2,S^3,\RP^3$, respectively. In the only other case, namely when $K\subset\RP^3$ is not null-homologous, \eqref{eq:g_4} is refined by \eqref{eq:g_D2} below with $[\Sigma]=\pm1$.
\end{Rmk}

When $d=0$, $\Sigma\subset D^2\times S^2$ being null-homologous is equivalent to $\partial\Sigma\subset S^1\times S^2$ being null-homologous. Since the $s$-invariant is only defined for null-homologous knots in $S^1\times S^2$, the null-homologous condition on $\Sigma$ puts no restriction. When $d=1,2$, however, we could ask whether $s(K)$ gives genus bounds for slice surfaces of $K$ in $D(d)$ that are not necessarily null-homologous. For $d=1$ this is conjectured in \cite{manolescu2023generalization} and proved by Ren \cite[Corollary~1.5]{ren2023lee}. Explicitly, for any $(\Sigma,K)\subset(\CP^2\backslash B^4,S^3)$ we have 
\begin{equation}\label{eq:g_D1}
2g(\Sigma)\ge-s(K)-[\Sigma]^2+|[\Sigma]|,
\end{equation}
where $|\cdot|$ is the $L^1$-norm (equivalently, absolute value) on $H_2(\CP^2\backslash B^4,S^3)\cong\Z$. We pose the following conjecture for the case $d=2$.

\begin{Conj*}
If $(\Sigma,K)\subset(DTS^2,\RP^3)$ is a properly embedded orientable connected surface that bounds a knot $K\subset\RP^3$, then 
\begin{equation}\label{eq:g_D2}
2g(\Sigma)\ge-s(K)-\frac{[\Sigma]^2}{2}.
\end{equation}
\end{Conj*}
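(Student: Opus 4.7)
The plan is to reduce the conjecture to the Theorem (the $[\Sigma]=0$ case) via a cobordism argument in $\RP^3 \times I$, after pushing $\Sigma$ off the zero section $S_0 \subset DTS^2$. Write $n = [\Sigma] \in H_2(DTS^2, \RP^3) \cong \Z$. First I would perturb $\Sigma$ to meet $S_0$ transversely in $k$ points; since the zero section is $2$ times the generator of $H_2(DTS^2, \RP^3)$, the algebraic count forces $k \geq |n|$, and one aims for equality via the Whitney trick (trivially available when $|n| \le 1$). Excising a small embedded disk from $\Sigma$ around each intersection yields a surface $\Sigma' \subset DTS^2 \setminus S_0 \cong \RP^3 \times (0,1]$ with $g(\Sigma') = g(\Sigma)$, whose boundary consists of $K$ and $k$ meridian circles of $S_0$, each isotopic in $\RP^3$ to the non-null-homologous unknot $U_1$ (a fiber of the sphere bundle $\RP^3 = STS^2 \to S^2$). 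Hence $\Sigma'$ realises a connected oriented cobordism in $\RP^3 \times I$ from $K$ to the $k$-component link $k \cdot U_1$.

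Next I would extend $\Sigma'$ by a planar (genus $0$) cobordism in $\RP^3 \times I$ that merges the $k$ copies of $U_1$ via $k-1$ saddle bands into a single knot $K_0(k) \subset \RP^3$ (null-homologous iff $k$ is even). The result $\Sigma''$ is a connected cobordism from $K$ to $K_0(k)$ of the same genus as $\Sigma$, and the Manolescu--Willis cobordism inequality (already established in their work, and recovered from the Theorem in the null-homologous case) gives
\[
2g(\Sigma) \ge -s(K) + s(K_0(k)).
\]

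To recover the conjectured bound $2g(\Sigma) \ge -s(K) - [\Sigma]^2/2$, one must choose the saddle bands to maximise $s(K_0(k))$ and verify that the optimal value is at least $-[\Sigma]^2/2$ when $k = |n|$. The hard step is identifying the knot $K_0(k)$ (an iterated band-sum of fibers of the fibration $\RP^3 \to S^2$) and computing its $s$-invariant in the Manolescu--Willis Khovanov-type theory for $\RP^3$---a genuinely combinatorial calculation in their Bar-Natan category. For $|n| \le 1$ the analysis is immediate ($k = 1$, $K_0 = U_1$, $s(U_1) = 0$, so $2g(\Sigma) \ge -s(K)$, which implies the conjecture after rounding), and these low-$|n|$ cases are presumably among the ``special cases'' the author establishes. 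A secondary obstruction is the Whitney trick itself: when it fails, as it generally can in dimension four, one is forced to take $k > |n|$, and controlling $s(K_0(k))$ then becomes even more delicate, likely requiring an auxiliary choice of saddles that absorbs the slack $k-|n|$ into a favourable bound on the $s$-invariant of the resulting knot.
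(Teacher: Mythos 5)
Your proposal takes a genuinely different route from the paper, but it contains two errors that break the argument, and a third structural choice that makes it strictly harder than what the paper does.

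\textbf{The genus claim is wrong.} You assert that banding the $k$ boundary circles of $\Sigma'$ together with $k-1$ saddles produces a cobordism $\Sigma''$ from $K$ to $K_0(k)$ ``of the same genus as $\Sigma$.'' Since $\Sigma'$ is already connected, each saddle joining two of its boundary components decreases $\chi$ by $1$ while also decreasing the number of boundary components by $1$; a short Euler characteristic count gives $g(\Sigma'')=g(\Sigma)+k-1$, not $g(\Sigma)$. With the corrected genus, the cobordism inequality yields $2g(\Sigma)\ge -s(K)+s(K_0(k))-2(k-1)$, which carries an extra term $-2(k-1)$ that you have silently dropped. (Your sanity check for $|n|=1$ passes precisely because $k-1=0$ there; it is not evidence that the general formula is right.)

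\textbf{The Whitney trick does not apply.} In dimension four one cannot in general reduce the geometric intersection number $k=p+q$ of $\Sigma$ with the core sphere to the algebraic intersection number $|n|=|p-q|$; this failure is not a minor nuisance to be ``absorbed'' but is the central difficulty. The paper handles it differently: it works with an arbitrary transverse representative ($p$ positive and $q$ negative intersections, any $p,q\ge 0$ with $p-q=[\Sigma]$), deletes a neighborhood of the core to get a cobordism to the \emph{link} $\overline{T(2;p,q)}$ in $\RP^3$, and then observes that if $s(T(2;p,q))+p+q$ depends only on $p-q$ the resulting inequality collapses to \eqref{eq:g_D2} regardless of $k$. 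This is exactly why the conjectured formula $s(T(2;p,q))=\lfloor(p-q)^2/2\rfloor-(p+q)+1$ has the $-(p+q)$ term: it cancels the slack coming from excess intersections. Your approach, by contrast, must control $s(K_0(k))$ for every achievable $k$, and your own closing paragraph concedes you have no handle on this.

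\textbf{The reduction to a knot is not a simplification.} Banding $T(2;p,q)$ down to a single knot $K_0(k)$ does not make the $s$-invariant computation easier; it trades the Sto\v{s}i\'c-style torus-link calculation the paper carries out (via the skein triangle \eqref{eq:LES_T}--\eqref{eq:LES_S} and the graphical bound of Lemma~\ref{lem:bound}) for an $s$-invariant of a less structured knot, while also introducing dependence on the choice of bands. In fact Manolescu--Willis already define $s$ for links in $\RP^3$ and prove the cobordism inequality \eqref{eq:s_cob} in that generality, so there is no need to pass to a knot at all. The paper's version of the reduction, \eqref{eq:adj_pre}, makes this precise: the whole problem becomes the single combinatorial statement that $s(T(2;p,q))=\lfloor(p-q)^2/2\rfloor-p-q+1$, which the paper then verifies for $|p-q|\le 3$ by an induction on the deformed Khovanov homology groups $Kh'(T_n^i)$, $Kh'(S_n^i)$.

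In short: the overall idea of cutting along the core sphere and invoking the cobordism inequality is the right skeleton and matches the paper's Section~\ref{sec:reduce}, but the genus bookkeeping is incorrect, the Whitney-trick step cannot be made to work, and the missing content --- the actual $s$-invariant computation that is the entire substance of Section~\ref{sec:s} --- is replaced by a harder and less canonical computation that you do not carry out.
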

The main theorem shows the conjecture when $[\Sigma]=0$. In fact, the same proof applies to show it in a couple more cases.
\begin{Prop}\label{prop:1}
The inequality \eqref{eq:g_D2} holds if $[\Sigma]=\pm1,\pm2,\pm3$ in $H_2(DTS^2,\RP^3)\cong\Z$.
\end{Prop}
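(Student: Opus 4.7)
The strategy is to follow the proof of the main theorem, tracking the extra contribution that appears when $[\Sigma] \neq 0$. I expect the main theorem is established by decomposing $DTS^2$ along a tubular neighborhood of the zero-section sphere into $(\RP^3 \times [0,1]) \cup B^4$, and viewing $\Sigma$ as the union of a cobordism $\Sigma_{\mathrm{cob}} \subset \RP^3 \times [0,1]$ with a cap $\Sigma_{\mathrm{cap}} \subset B^4$. For null-homologous $\Sigma$ the cap is a system of trivial disks, and the inequality reduces to the Manolescu--Willis cobordism bound in $\RP^3 \times [0,1]$.

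For general $[\Sigma] = k$, after isotopy $\Sigma$ can be arranged to meet the zero section transversely in $|k|$ signed points, so the cap $\Sigma_{\mathrm{cap}}$ contains $|k|$ disks whose boundaries are copies of the non-null-homologous Hopf unknot $U \subset \RP^3$. My plan is to apply the main theorem's argument to $\Sigma_{\mathrm{cob}}$, and then identify the additional filtration shift introduced by the cap on the Bar-Natan--Lee deformation of the Khovanov cobordism map. The target shift is exactly $-[\Sigma]^2/2$, matching the rational self-intersection contribution from the $|k|$ intersection points with the zero section.

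The main obstacle is the non-vanishing of the full cobordism map modulo the relevant filtration. For $|k| \leq 3$, I expect a direct finite computation, essentially enumerating the Bar-Natan generators associated to the unknot $U$ and tracking sign cancellations for small numbers of $U$-components, to yield the bound. For $|k| \geq 4$, I anticipate that either new cancellations appear or the necessary configuration of disks fails to realize the optimal filtration shift, which is why the elementary adaptation covers only the stated cases.
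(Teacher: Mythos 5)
Your topological setup is incorrect, and this leads the rest of the argument astray. Removing a tubular neighborhood of the zero section $S^2\subset DTS^2$ leaves $\RP^3\times[0,1]$, but the removed piece is a smaller copy of the $D^2$-bundle $DTS^2$ itself, not a $B^4$; there is no decomposition of $DTS^2$ of the form $(\RP^3\times[0,1])\cup B^4$. Consequently there is no ``cap $\Sigma_{\mathrm{cap}}\subset B^4$.'' What the paper actually does is: after isotopy, $\Sigma$ meets the zero section $S$ transversely in $p$ positive and $q$ negative points with $p-q=[\Sigma]$ (note $p+q$ need not equal $|[\Sigma]|$), and $\Sigma_0:=\Sigma\setminus\nu(S)$ is a cobordism in $\RP^3\times[0,1]$ from $\overline{T(2;p,q)}$ to $K$, where $T(2;p,q)$ is the mirror of $p+q$ fibers of the circle bundle $\RP^3\to S^2$ with $p$ oriented against $q$. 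This link is not a disjoint union of copies of the class-$1$ unknot; for $p+q\ge2$ its components have pairwise linking number $1/2$. Applying the Manolescu--Willis cobordism inequality to $\Sigma_0$ then reduces the proposition entirely to computing $s(T(2;p,q))$, with no separate ``cap contribution'' to track.

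The second and larger gap is that the content of the proposition is the computation $s(T(2;p,q))=\lfloor(p-q)^2/2\rfloor-p-q+1$ for $|p-q|\le3$, and your proposal replaces this with an unspecified enumeration of ``Bar-Natan generators of the unknot $U$.'' That is not a proof. The paper's actual method is a Sto\v{s}i\'c-style induction: one introduces auxiliary families of links $T_n^i$, $S_n^i$, uses skein exact triangles for the deformed Khovanov homology $Kh'$, proves a vanishing lemma bounding where $Kh'$ can live, and then at the critical homological degree $h=pq$ shows that the rank of $Kh'^{pq,*}(T_n^{n-1})$ forced by the skein triangles matches the dimension $\binom{p+q}{p}+\binom{p+q}{q}$ of $Kh_{Lee}$ in that degree. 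That equality forces the Lee spectral sequence to collapse at $h=pq$, which pins down the filtration level of the canonical Lee generators and hence $s$. The restriction to $|p-q|\le3$ is not about ``sign cancellations for small numbers of $U$-components''; it is that for $|p-q|\ge4$ the rank upper bound coming from the skein triangles no longer matches the $Kh_{Lee}$ dimension, so the collapse argument fails, and one would instead need an injectivity statement about the maps in the skein triangles (Question~\ref{que} / Remark~\ref{rmk:fail}) that the paper cannot establish in general.
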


In fact, in the various settings above, the $s$-invariants are defined for links as well as knots \cite{beliakova2008categorification,manolescu2023generalization,manolescu2023rasmussen}. As remarked in \cite{manolescu2023generalization,manolescu2023rasmussen}, \eqref{eq:g_D} for $d=0,1,2$ and \eqref{eq:g_D1} reduce to computing the $s$-invariants of some special family of links; similarly, \eqref{eq:g_D2} also reduces to computing the $s$-invariant of a family of links. We will explain these reductions in Section~\ref{sec:reduce}. In Section~\ref{sec:s}, we calculate the $s$-invariants that enable us to conclude the main theorem and Proposition~\ref{prop:1}. We also pose a technical question whose positive answer implies the conjecture above.

\subsection*{Acknowledgement}
The author thanks Ciprian Manolescu for suggesting this problem and for helpful discussions.

\section{Reduce to \texorpdfstring{$s$}{s}-invariant calculations}\label{sec:reduce}
In this section, we define links $T(d;p,q)\subset\partial D(d)$ for $p,q\ge0$, such that \eqref{eq:g_D} reduces to the calculation of the $s$-invariants of $T(d;p,p)$, $d=0,1,2$, and \eqref{eq:g_D1}\eqref{eq:g_D2} reduce to that of $T(d;p,q)$, $d=1,2$. The strategy is essentially due to \cite{manolescu2023generalization,manolescu2023rasmussen}, but we carry it out explicitly for completeness.

Let $S\subset D(d)$ denote the core $2$-sphere of the $D^2$-bundle $D(d)\to S^2$. Any properly embedded oriented surface $\Sigma\subset D(d)$ can be perturbed so that it intersects $S$ transversely at some $p$ points positively and some $q$ points negatively. Removing a tubular neighborhood of $S$, we obtain a properly embedded surface $\Sigma_0\subset\partial D(d)\times[0,1]$, whose boundary on $\partial D(d)\times\{1\}$ is the original boundary of $\Sigma$ and whose boundary on $\partial D(d)\times\{0\}$ is an oriented link in $\partial D(d)$ consisting of $p+q$ fibers of the $S^1$-bundle $\partial D(d)\to S^2$, $p$ of which oriented positively and $q$ of which negatively. We denote the mirror of this link by $T(d;p,q)$; thus, $\Sigma_0$ is a cobordism from $\overline{T(d;p,q)}$ to $\partial\Sigma$.

\begin{Ex}
$T(0;p,q)$ is the disjoint union of $p+q$ knots of the form $*\times S^2\subset S^1\times S^2$, $p$ of which oriented upwards and $q$ of which downwards. It is denoted by $F_{p,q}$ in \cite{manolescu2023generalization}.
\end{Ex}

\begin{Ex}
$\overline{\partial D(1)}\to S^2$ is the Hopf fibration, hence its fibers have pairwise linking number $1$. Thus $T(1;p,q)$ is the torus link $T(p+q,p+q)$ in which $p$ of the strands are oriented against the other $q$ strands. It is denoted by $F_{p,q}(1)$ in \cite{manolescu2023generalization} and $T(p+q,p+q)_{p,q}$ in \cite{ren2023lee}.
\end{Ex}

\begin{Ex}\label{ex:2}
Think of $\RP^3$ as the $3$-ball $B^3$ with antipodal points on the boundary identified. Then $T(2;p,q)$ can be obtained by standardly embedding a half-twist on $p+q$ strands, $p$ of which oriented against the other $q$, into $B^3$ such that the endpoints land on the boundary. This can be seen by realizing $T(2;p,q)\subset\RP^3$ as the quotient of $T(1;p,q)\subset S^3$ by the standard involution on $S^3$. $T(2;p,p)$ is denoted by $H_p$ in \cite{manolescu2023rasmussen}.
\end{Ex}

By \cite{beliakova2008categorification,manolescu2023generalization,manolescu2023rasmussen}, if $\Sigma$ is an oriented cobordism in $Y\times[0,1]$ between two (null-homologous if $Y=S^1\times S^2$) oriented links $L_0$ and $L_1$ in $Y$, $Y=S^3,S^1\times S^2,\RP^3$, such that every component of $\Sigma$ has a boundary in $L_0$, then 
\begin{equation}\label{eq:s_cob}
s(L_1)-s(L_0)\ge\chi(\Sigma).
\end{equation}
By construction, if $(\Sigma,L)\subset(D(d),\partial D(d))$ is a properly embedded oriented connected surface without closed components, by deleting a tubular neighborhood of the core $S\subset D(d)$, we obtain a cobordism $\Sigma_0$ from some $\overline{T(d;p,q)}$ to $L$, each of whose components has a boundary in $L$. Turning the cobordism upside down and applying \eqref{eq:s_cob} give
\begin{equation}\label{eq:adj_pre}
s(T(d;p,q))-s(\bar L)\ge\chi(\Sigma_0)=\chi(\Sigma)-p-q,
\end{equation}
where the last inequality holds because topologically $\Sigma_0$ is $\Sigma$ with $p+q$ disks removed from its interior.

The number $p-q$ equals to the homology class $[\Sigma]\in H_2(D(d),\partial D(d))$ upon an identification $H_2(D(d),\partial D(d))\cong H^2(D(d))\cong\Z$. Thus, for $[\Sigma]=p-q$ a fixed number, if $s(T(d;p,q))+p+q$ is independent of specific $p,q$, then \eqref{eq:adj_pre} can be rewritten in terms of $s(\bar L)$, $\chi(\Sigma)$, and $[\Sigma]$. This is the case for $d=0$, $[\Sigma]=0$, as well as for $d=1$ with any $[\Sigma]$. Explicitly, by \cite[Theorem~1.6]{manolescu2023generalization}\cite[Theorem~1.1]{ren2023lee} we have 
\begin{equation}\label{eq:s_01}
s(T(0;p,p))=-2p+1,\ s(T(1;p,q))=(p-q)^2-2\max(p,q)+1.
\end{equation}
We conjecture this is also true for $d=2$ and any $[\Sigma]$, with
\begin{equation}\label{eq:s_2}
s(T(2;p,q))=\left\lfloor\frac{(p-q)^2}{2}\right\rfloor-p-q+1,
\end{equation}
and give a proof of it for $[\Sigma]=0,\pm1,\pm2,\pm3$. 

In the special case when $L=K$ is a knot, we have $s(\bar L)=s(\bar K)=-s(K)$ by \cite[Proposition~3.10]{rasmussen2010khovanov}\cite[Proposition~8.8(1)]{manolescu2023generalization}\cite[Proposition~4.10]{manolescu2023rasmussen}. In this case,
plugging \eqref{eq:s_01} into \eqref{eq:adj_pre} gives \eqref{eq:g_D} for $d=0,1$, and \eqref{eq:g_D1}. Plugging \eqref{eq:s_2} into \eqref{eq:adj_pre} would give the conjectural inequality \eqref{eq:g_D2}, although we are only able to prove it for $|p-q|\le3$.

\begin{Rmk}
\begin{enumerate}
\item It is easy to prove \eqref{eq:s_2} for $pq=0$, since in this case $T(2;p,q)$ is a positive link and one can apply \cite[Remark~6.3]{manolescu2023rasmussen}. However this does not help in establishing \eqref{eq:g_D2}.
\item If \eqref{eq:s_2} were true in general, one can proceed as in \cite[Section~4]{ren2023lee} to determine the entire quantum filtration structure of the Lee homology (as defined in \cite{manolescu2023rasmussen}) of $s(T(2;p,q))$.
\end{enumerate}
\end{Rmk}

\section{\texorpdfstring{$s$}{s}-invariants of \texorpdfstring{$T(2;p,q)$}{T(2;p,q)}}\label{sec:s}
As explained in Section~\ref{sec:reduce}, the main theorem and Proposition~\ref{prop:1} reduce to the following proposition.
\begin{Prop}\label{prop:s}
For $p,q\ge0$ with $|p-q|\le3$, the $s$-invariant (as defined in \cite{manolescu2023rasmussen}) of the link $T(2;p,q)\subset\RP^3$ defined in Section~\ref{sec:reduce} is given by $$s(T(2;p,q))=\left\lfloor\frac{(p-q)^2}{2}\right\rfloor-p-q+1.$$
\end{Prop}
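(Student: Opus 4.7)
The plan is to establish $s(T(2;p,q))=\lfloor(p-q)^2/2\rfloor-p-q+1$ for $|p-q|\le 3$ via matching upper and lower bounds, each coming from the cobordism inequality \eqref{eq:s_cob}.

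For the base case $pq=0$, the half-twist diagram of $T(2;p,0)$ (respectively $T(2;0,q)$) has all crossings positive, so the link is positive and the formula follows from \cite[Remark~6.3]{manolescu2023rasmussen} together with a direct computation. This is the observation already made in the remark at the end of Section~\ref{sec:reduce}, and substituting gives $s(T(2;p,0)) = \lfloor p^2/2\rfloor - p + 1$, matching the claim.

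For the main case with $\min(p,q)\ge1$, I would construct an explicit oriented cobordism from $T(2;p,q)$ to a link whose $s$-invariant is known from the base case (and, if needed, a split-sum formula). The natural candidate comes from the half-twist diagram itself: the $pq$ crossings between oppositely-oriented strands are negative, and resolving each in the oriented (Seifert) way is a saddle contributing $-1$ to $\chi$. The endpoint of the resulting $\chi=-pq$ cobordism is a link closely related to the split union $T(2;p,0)\sqcup T(2;0,q)$. Applying \eqref{eq:s_cob} in this direction gives one side of the desired equality. A cobordism in the reverse direction — either built from auxiliary saddles and birth/death moves between nearby $T(2;p',q')$, or obtained by capping off an explicit slice surface in $DTS^2$ whose existence one verifies directly — yields the matching bound.

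The main obstacle is making the two bounds coincide. For $|p-q|$ small the Euler characteristic of the natural cobordism constructions hits exactly the value $\lfloor(p-q)^2/2\rfloor-p-q+1$ needed, but as $|p-q|$ grows the naive constructions become sub-optimal and leave a gap. The restriction $|p-q|\le 3$ in Proposition~\ref{prop:s} is presumably the range in which the basic cobordisms remain sharp and the $s$-invariants of the intermediate (non-positive) links can be pinned down by elementary means, whereas beyond this range one would need additional input — consistent with the technical question alluded to in Section~\ref{sec:reduce} whose positive answer would settle the general conjecture.
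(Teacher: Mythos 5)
Your approach is fundamentally different from the paper's, and unfortunately it does not work. The paper proves Proposition~\ref{prop:s} by a careful Khovanov-homology calculation in the style of Sto\v{s}i\'c: it establishes vanishing regions for $Kh'$ of the auxiliary links $T_n^i,S_n^i$ (Lemma~\ref{lem:bound}), computes the exact rank of $Kh'$ in the extremal homological degree via skein exact triangles, shows by a dimension count that the Lee spectral sequence collapses immediately there, and reads off the $s$-invariant from the lowest quantum filtration level. The restriction $|p-q|\le3$ arises because the vanishing bounds in Lemma~\ref{lem:bound} only pin down the filtration degree in this range (see Remark~\ref{rmk:fail}), not because of any cobordism sharpness.

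The cobordism-only strategy you propose has a quantitative gap that is fatal even in the simplest nontrivial case $p=q$. Seifert-smoothing the $pq$ negative crossings gives a cobordism with $\chi=-pq$ from $T(2;p,q)$ to some positive link $L$, so \eqref{eq:s_cob} yields $s(T(2;p,q))\le s(L)+pq$. Heuristically $s(L)$ should be on the order of $\lfloor p^2/2\rfloor+\lfloor q^2/2\rfloor-p-q+1$, giving an upper bound of roughly $\lfloor p^2/2\rfloor+\lfloor q^2/2\rfloor+pq-p-q+1$, whereas the target value $\lfloor(p-q)^2/2\rfloor-p-q+1$ is smaller by about $2pq$. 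Thus the bound is off by a growing amount precisely in the case $[\Sigma]=0$ that the Main Theorem requires, so the discrepancy is not a matter of "slight sub-optimality for large $|p-q|$" as you suggest. Furthermore, the claimed reverse cobordism "yielding the matching bound" is never constructed; the $s$-invariant cannot be pinned down by the cobordism inequality alone unless one exhibits cobordisms in both directions with canceling Euler characteristics, and no such pair exists here. The base case $pq=0$ is correct (and noted in the paper's remark), but the paper also observes that it does not help establish \eqref{eq:g_D2} — consistent with the fact that your reduction of the general case to $pq=0$ loses too much.
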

Sto\v{s}i\'c \cite[Theorem~3]{stovsic2009khovanov} calculated the Khovanov homology groups of the positive torus links $T(n,n)$ in their highest nontrivial homological grading $h=2n^2$. For dimension reason, the Lee spectral sequence from $Kh(T(n,n))\otimes\Q$ to the Lee homology $Kh_{Lee}(T(n,n))$ collapses immediately in this homological degree. This can be used to give an alternative proof of $s(T(1;p,p))=1-2p$, a fact that is reproved in \cite[Theorem~1.7]{manolescu2023generalization}. We prove Proposition~\ref{prop:s} by adapting the argument of Sto\v{s}i\'c. It is worth remarking that the calculation of the more general $s(T(1;p,q))$ was done in \cite{ren2023lee} by pushing Sto\v{s}i\'c's argument slightly further. However, we were not able to achieve the same here to prove \eqref{eq:s_2} in its full generality.

\subsection{Review of Khovanov homology in \texorpdfstring{$\RP^3$}{RP3}}
We first briefly review some properties of the Khovanov/Lee homology and the $s$-invariant of links in $\RP^3$, following \cite{manolescu2023rasmussen}. We only give definitions that will be relevant to us. We assume the reader is familiar with the usual theory in $S^3$, in particular \cite{khovanov2000categorification,lee2005endomorphism,rasmussen2010khovanov}.

Think $\RP^3\backslash*$ as the twisted $I$-bundle over $\RP^2$, we see that links in $\RP^3$ can be represented by link diagrams in $\RP^2$, and two different diagrams of the same link are related by the usual three Reidemeister moves. Although the over/under strands are not well-defined at a crossing, it is unambiguous to distinguish positive/negative crossings (if the link is oriented or has only one component) or to define $0/1$-resolutions at a crossing in such a link diagram.

Let $L\subset\RP^3$ be an oriented link with an oriented link diagram $D$. Let $2^n=(0\to1)^n$ denote the hypercube of complete resolutions seen as a directed graph, where $n$ is the number of crossings in $D$. Every vertex $v$ corresponds to a complete resolution $D_v$, which is assigned a bigraded abelian group $C(D_v)$. Every edge $e$ from a vertex $v$ to a vertex $w$ corresponds to a saddle from $D_v$ to $D_w$, which is assigned four maps $\partial_0^e,\partial_-^e,\Phi_0^e,\Phi_+^e\colon C(D_v)\to C(D_w)$ of bidegree $(0,0),(0,-2),(4,0),(4,2)$, respectively. The \textit{Khovanov complex} of $D$ is $$C(D):=\oplus_vC(D_v)[-n_-+|v|]\{n_+-2n_-\}$$ equipped with the differential $\partial:=\sum_e\partial_0^e$. Here $[\cdot]$ denotes the homological grading shift, $\{\cdot\}$ denotes the shift in the first grading (called quantum grading) of $C(D_v)$, $n_\pm$ denotes the number of positive/negative crossings in $D$ and $|v|$ denotes the number of $1$'s in $v$. The \textit{Lee complex} is $C_{Lee}(D):=C(D)\otimes\Q$ equipped with the differential $\partial_{Lee}:=\sum_e(\partial_0^e+\partial_-^e+\Phi_+^e+\Phi_0^e)\otimes\Q$. For our purpose, we also consider a \textit{deformed Khovanov complex}, defined as $C'(D):=C(D)$ equipped with the differential $\partial':=\sum_e(\partial_0^e+\partial_-^e)$. The cohomologies of these three complexes are denoted by $Kh(L)$, $Kh_{Lee}(L)$, $Kh'(L)$, respectively, which do not depend on the choice of the link diagram $D$.

The group $Kh(L)$ is trigraded by $h$ (homological grading), $q$ (quantum grading), and $k$; $Kh'(L)$ is bigraded by $h,q$; $Kh_{Lee}(L)$ is graded by $h$ and filtered by $q$. As a vector space, $Kh_{Lee}(L)\cong\Q^{2^{|L|}}$ is spanned by some generators $[s_\mathfrak{o}]$, where $\mathfrak{o}$ runs over the all possible orientations of $L$ as an unoriented link\footnote{In fact, the definition of $[s_\mathfrak{o}]$ depends on some auxiliary choices, which we may ignore here.}. When $\mathfrak{o}$ is the given orientation on $L$, $[s_\mathfrak{o}]$ sits in homological degree $0$, and its quantum filtration degree plus $1$ is defined as the \textit{$s$-invariant} of $L$. As a filtered complex, the associated graded complex of $C_{Lee}(L)$ is exactly $C'(L)\otimes\Q$. Thus, there is a spectral sequence with $E_1$-page $Kh'(L)\otimes\Q$ that converges to $Kh_{Lee}(L)$, whose $r$-th differential has bidegree $(1,4r)$.

The orientation on $L$ plays only a minor role on the group $Kh^\bullet(L)$, where $\bullet$ denotes one of the three favors we are considering. Explicitly, negating the orientation on a sublink $L'\subset L$ shifts its grading by $[2\ell]\{6\ell\}$, where $\ell$ is the linking number (which takes half-integer values) between $L'$ and $L\backslash L'$ with the new orientations.

Every cobordism $\Sigma\colon L_0\to L_1$ between two oriented links with diagrams $D_0,D_1$ induces a chain map $C^\bullet(\Sigma)\colon C^\bullet(D_0)\to C^\bullet(D_1)$ with some grading shifts. By design, if $D_{0/1}$ are the $0/1$-resolutions at a crossing of a link diagram $D$ of some link $L$, and $\Sigma$ is the obvious saddle cobordism, then $C^\bullet(D)$ is isomorphic to the mapping cone of $C^\bullet(\Sigma)$ up to grading shifts. More explicitly, for our convenience we record that if $D_{0/1}$ are the $0/1$-resolutions of $D$ at a positive crossing, and $L_0$ is assigned the induced orientation from $L$ while $L_1$ is assigned any orientation, then $C'(D)\cong Cone(C'(D_0)\to C'(D_1)[c]\{3c+1\})[1]\{1\}$, where $c=n_-(D_1)-n_-(D)$. Thus we have the following exact triangle of deformed Khovanov homology groups
\begin{equation}\label{eq:skein}
\begin{tikzcd}
Kh'(L_1)[c+1]\{3c+2\}\ar[rr]&&Kh'(L)\ar[ld]\\&Kh'(L_0)\{1\}\ar[lu,"\text{[}1\text{]}"].&
\end{tikzcd}
\end{equation}

If $\Sigma\colon L_0\to L_1$ is an oriented cobordism, it preserves the homological grading $h$ and changes the quantum grading $q$ by $\chi(\Sigma)$. Moreover, it sends a generator $[s_\mathfrak{o}]\in Kh_{Lee}(L_0)$ to some $\sum_{\mathfrak{o'}}\lambda_{\mathfrak{o'}}[s_\mathfrak{o'}]\in Kh_{Lee}(L_1)$, where $\mathfrak{o'}$ runs over orientations of $L_1$ such that there is an orientation on $\Sigma$ making it an oriented cobordism $(L_0,\mathfrak{o})\to(L_1,\mathfrak{o'})$, and $\lambda_{\mathfrak{o'}}\in\Q^\times$. In particular, this implies \eqref{eq:s_cob}.

Finally, we remark that there are by definition two unknots in $\RP^3$. The \textit{class-$0$ unknot} $U_0$ is an unknot in a small ball contained in $\RP^3$; the \textit{class-$1$ unknot} $U_1$ is a copy of the standardly embedded $\RP^1\subset\RP^3$. Both these unknots have rank-$2$ deformed Khovanov homology given by $Kh'^{0,\pm1}(U_i)=\Z$. The deformed Khovanov homology behaves as expected under the disjoint union of two links, one in $\RP^3$ and one in $S^3$. In particular, regarding $U_0$ as a knot in $S^3$, we have $Kh'(L\sqcup U_0)=Kh'(L)\{1\}\oplus Kh'(L)\{-1\}$ for any link $L\subset\RP^3$.

\subsection{Calculation of \texorpdfstring{$s$}{s}}
Now we are ready to prove Proposition~\ref{prop:s}. We first define two auxiliary family of links $T_n^i$, $S_n^i$, $0\le i\le n-1$. These should be compared with $D_{n,n-1}^i$, $D_{n,n}^i$ in \cite[Section~5]{ren2023lee}.

Think of $\RP^2$ as $D^2$ with antipodal points on the boundary identified. A braid diagram can be placed into $D^2$ with its endpoints on $\partial D^2$, identified pairwisely to give a link diagram in $\RP^2$; this is called by \cite{manolescu2023rasmussen} the projective closure of the given braid. Let $T_n^i$ be the link represented by the projective closure of $\sigma_{n-1}(\sigma_{n-2}\sigma_{n-1})\cdots(\sigma_2\cdots\sigma_{n-1})(\sigma_1\cdots\sigma_i)$, and $S_n^i$ be the link presented by the projective closure of $\sigma_{n-1}(\sigma_{n-2}\sigma_{n-1})\cdots(\sigma_1\cdots\sigma_{n-1})(\sigma_{n-1}\cdots\sigma_{n-i})$. We equip $T_n^i$, $S_n^i$ with the orientation where all strands are oriented upwards. By the description in Example~\ref{ex:2}, $T(2;n,0)$ is exactly the link $T_n^{n-1}$, and all $T(2;p,n-p)$ are $T_n^{n-1}$ with a possibly different orientation. Also, by definition $S_n^0=T_n^{n-1}$, and it is easy to check by Reidemeister moves\footnote{If one wishes to think the link diagrams as sitting in $D^2$, there will be two additional Reidemeister moves when one crosses the boundary, as illustrated in \cite[Figure~1]{gabrovvsek2013categorification}. Note however the picture (e) there was incorrectly drawn.} that $T_n^0=S_{n-2}^{n-3}$.

For $n\ge2$, $i>0$, resolving the crossing in the standard diagram of $T_n^i$ that corresponds to the last word $\sigma_i$ gives $T_n^{i-1}$ as the $0$-resolution and some other link $(S_n^i)_1$ as the $1$-resolution. Similarly, resolving the crossing of $S_n^i$ that corresponds to the last $\sigma_{n-i}$ gives $0$-resolution $S_n^{i-1}$ and $1$-resolution $(S_n^i)_1$. By Reidemeister moves, one may check that in fact (as unoriented links)
\begin{equation}\label{eq:T_1S_1}
(T_n^i)_1=\begin{cases}T_{n-2}^{n-3}\sqcup U_0,&i=n-1\\T_{n-2}^{i-1},&i<n-1\end{cases},\ (S_n^i)_1=\begin{cases}S_{n-2}^{i-2},&i>1\\S_{n-2}^0\sqcup U_0,&i=1\end{cases}.
\end{equation}
Here $U_0$ is the class-$0$ unknot, and as a convention we define $T_0^{-1}=S_0^0=\emptyset$ to be the empty link.

Give $(T_n^i)_1$, $(S_n^i)_1$ the orientations of the right hand sides in the identification \eqref{eq:T_1S_1}, the skein exact triangle \eqref{eq:skein} gives exact triangles
\begin{equation}\label{eq:LES_T}
Kh'((T_n^i)_1)[n-1]\{3n-4\}\to Kh'(T_n^i)\to Kh'(T_n^{i-1})\{1\}\xrightarrow{[1]},
\end{equation}
\begin{equation}\label{eq:LES_S}
Kh'((S_n^i)_1)[n]\{3n-1\}\to Kh'(S_n^i)\to Kh'(S_n^{i-1})\{1\}\xrightarrow{[1]}.
\end{equation}

We prove a lemma that gives a ``graphical lower bound'' of the deformed Khovanov homology groups of $T_n^i$ and $S_n^i$, in the spirit of \cite[Theorem~2.1]{ren2023lee}. In fact, most parts of the statement won't be relevant for our purpose. But since the general statement is not much more complicated to state and to prove, we include it fully here.
\begin{Lem}\label{lem:bound}
\begin{enumerate}
\item $Kh'^{h,q}(T_n^i)=0$ for $h<0$ or $h>\lfloor n^2/4\rfloor$ or $q-h<\lfloor n^2/2\rfloor-2n+1+i$.
\item $Kh'^{h,q}(S_n^i)=0$ for $h<0$ or $h>\lfloor n^2/4\rfloor+\lceil i/2\rceil$ or $q-h<\lfloor n^2/2\rfloor-n+i$ or $q-2h<\lfloor n^2/4\rfloor-n+i$.
\end{enumerate}
\end{Lem}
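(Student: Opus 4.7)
The plan is to prove parts (1) and (2) simultaneously by induction on $n$ with step size $2$, using the exact triangles \eqref{eq:LES_T} and \eqref{eq:LES_S} to reduce everything to smaller cases. The two parts are tied together by the identities $S_n^0 = T_n^{n-1}$ and $T_n^0 = S_{n-2}^{n-3}$ already recorded before the lemma (the latter valid for $n \geq 3$). The base cases $n \leq 2$ are verified by writing out the small projective-closure diagrams and computing $Kh'$ by hand; most of the links involved are empty or the class-$1$ unknot $U_1$.

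For the inductive step at $n \geq 3$, fix $n$ and assume both parts of the lemma for all smaller values. First establish part (1) at $i = 0$: the identity $T_n^0 = S_{n-2}^{n-3}$ reduces this to part (2) for $S_{n-2}^{n-3}$, and a short arithmetic check shows the two inequalities for $T_n^0$ follow from the three for $S_{n-2}^{n-3}$. Then run a secondary induction on $i$ for part (1): the triangle \eqref{eq:LES_T} gives at each bidegree a three-term exact sequence
\[
Kh'^{h-n+1,\, q-3n+4}((T_n^i)_1) \longrightarrow Kh'^{h, q}(T_n^i) \longrightarrow Kh'^{h,\, q-1}(T_n^{i-1}),
\]
so the middle group vanishes as soon as the two flanking groups do. Apply the secondary induction hypothesis to $T_n^{i-1}$ and the primary induction to $(T_n^i)_1 \in \{T_{n-2}^{i-1},\, T_{n-2}^{n-3} \sqcup U_0\}$; the disjoint-union case is handled via $Kh'(L \sqcup U_0) = Kh'(L)\{1\} \oplus Kh'(L)\{-1\}$. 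Once part (1) is in hand for this $n$, deduce part (2) at $i = 0$ from $S_n^0 = T_n^{n-1}$ (the new $q - 2h$ inequality comes from combining the $h$ and $q-h$ inequalities via $\lfloor n^2/2 \rfloor - \lfloor n^2/4 \rfloor = \lfloor n^2/4 \rfloor$), and run the analogous secondary induction on $i$ for part (2) using \eqref{eq:LES_S}.

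The main obstacle is entirely arithmetic bookkeeping: each of the two or three inequalities defining the forbidden region must be checked after applying each of the shifts $\{1\}$, $[n-1]\{3n-4\}$, $[n]\{3n-1\}$, and also the $\{+1\} \oplus \{-1\}$ split arising from the unknot factor. The identities
\[
\lfloor n^2/4 \rfloor - \lfloor (n-2)^2/4 \rfloor = n - 1, \qquad \lfloor n^2/2 \rfloor - \lfloor (n-2)^2/2 \rfloor = 2n - 2
\]
(together with the one above) are what make the numbers line up. The tightest check is the $q - 2h$ inequality in part (2), which allows no slack: the shift $[n]\{3n-1\}$ carries the inductive bound $q - 2h < \lfloor (n-2)^2/4 \rfloor - (n-2) + (i-2)$ for $S_{n-2}^{i-2}$ to precisely $q - 2h < \lfloor n^2/4 \rfloor - n + i$ for $S_n^i$, so the induction only just closes on this bound.
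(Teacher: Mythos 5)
Your proposal is correct and follows essentially the same strategy as the paper: a double induction (outer on $n$, inner on $i$), base cases at $n\le 2$, using the identifications $T_n^0=S_{n-2}^{n-3}$ and $S_n^0=T_n^{n-1}$ to link the two parts, and the exact triangles \eqref{eq:LES_T}, \eqref{eq:LES_S} together with arithmetic bookkeeping of the grading shifts for the inductive step. Your supplementary observations (that $\lfloor n^2/2\rfloor - \lfloor n^2/4\rfloor = \lfloor n^2/4\rfloor$ makes the $q-2h$ bound for $S_n^0$ follow from part (1) for $T_n^{n-1}$, and that the $q-2h$ check under the shift $[n]\{3n-1\}$ closes with no slack) are accurate and correctly identify the delicate points of the bookkeeping.
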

\begin{Rmk}\label{rmk:fail}
The $s$-invariant of all $T(1;p,q)\subset S^3$ was deduced in \cite{ren2023lee} from Theorem~2.1 there. The difficulty that prevents us to similarly deduce \eqref{eq:s_2} from Lemma~\ref{lem:bound} is that we were not able to establish an injectivity result like the addendum in Theorem~2.1(1) in \cite{ren2023lee} (there we actually have an isomorphism; however, only injectivity is needed for the proof, and only injectivity is expected in our case). See also Section~\ref{sec:s_que}.
\end{Rmk}
\begin{pf}
We induct on $n,i$. For $n=1$ this is immediate, since $S_1^0=T_1^0=U_1$ is the class-$1$ unknot. For $n=2$, $T_2^0=U_0$ satisfies \textit{(1)}. By \eqref{eq:T_1S_1}\eqref{eq:LES_T}, nonzero homology groups of $T_2^1$ are exactly $$Kh'^{h,q}(T_2^1)=\Z,\ (h,q)=(0,0),(0,2),(1,1),(1,3),$$ thus $T_2^1$ satisfy \textit{(1)} and $S_2^0=T_2^1$ satisfy \textit{(2)}. By \eqref{eq:T_1S_1}\eqref{eq:LES_S}, $S_2^1$ has $$Kh'^{h,q}(S_2^1)=\Z,\ (h,q)=(0,1),(0,3),(1,2),(2,6),$$ and zero elsewhere possibly except when $(h,q)=(1,4),(2,4)$, thus it satisfies \textit{(2)}. Now, by the induction hypothesis and \eqref{eq:T_1S_1}\eqref{eq:LES_T}\eqref{eq:LES_S}, \textit{(1)(2)} are inductively proved for $n>2$ by checking the elementary statements that\vspace{-5pt}
\begin{itemize}
\item The vanishing region (in the $hq$-coordinate plane) of $Kh'(T_n^0)$ described in \textit{(1)} is contained in that of $Kh'(S_{n-2}^{n-3})$ described in \textit{(2)};
\item For $i>0$, the vanishing region of $Kh'(T_n^i)$ is contained in that of both $Kh'((T_n^i)_1)[n-1]\{3n-4\}$ and $Kh'(T_n^{i-1})\{1\}$;
\item The vanishing region of $Kh'(S_n^0)$ is identical to that of $Kh'(T_n^{n-1})$;
\item For $i>0$, the vanishing region of $Kh'(S_n^i)$ is contained in that of both $Kh'((S_n^i)_1)[n]\{3n-1\}$ and $Kh'(S_n^{i-1})\{1\}$.\qedhere
\end{itemize}
\end{pf}

\begin{proof}[Proof of Proposition~\ref{prop:s}]
We divide into four cases according to the value of $|p-q|$. We give the proof carefully for the case $|p-q|=0$, and more casually for the rest cases, as they will be similar.

\textbf{Case 1}: $|p-q|=0$.

Write $n=p+q=2m$. We induct on $m$ to show that 
\begin{equation}\label{eq:rank}
\rank Kh'^{m^2,*}(T_n^i)=2\binom{i}{m},
\end{equation}
\begin{equation}\label{eq:low}
\inf\{q\colon Kh'^{m^2,q}(T_n^{n-1})\ne0\}=3m^2-2m.
\end{equation}
When $m=1$, we have $T_2^0=U_0$ satisfies \eqref{eq:rank}, and $T_2^1$ satisfies \eqref{eq:rank}\eqref{eq:low} by the description of $Kh'(T_2^1)$ in the proof of Lemma~\ref{lem:bound}.

Assume now $m>1$. We have $Kh^{m^2,*}(T_n^0)=0$ by Lemma~\ref{lem:bound}(2) applied to $S_{n-2}^{n-3}=T_n^0$; thus $T_n^0$ satisfies \eqref{eq:rank}. For $i>0$, \eqref{eq:LES_T} and \eqref{eq:T_1S_1} give
\begin{align}
\rank Kh'^{m^2,*}(T_n^i)\le&\,\rank Kh'^{m^2,*}(T_n^{i-1})+\rank Kh'^{(m-1)^2,*}(T_{n-2}^{i-1}),\quad i<n-1\label{eq:T_n^i}\\
\rank Kh'^{m^2,*}(T_n^{n-1})\le&\,\rank Kh'^{m^2,*}(T_n^{n-2})+2\rank Kh'^{(m-1)^2,*}(T_{n-2}^{n-3}).\label{eq:T_n^n-1}
\end{align}
Use \eqref{eq:T_n^i} iteratively and \eqref{eq:T_n^n-1}, as well as the induction hypothesis, we obtain $$\rank Kh'^{m^2,*}(T_n^i)\le2\binom{i}{m}.$$ On the other hand, due to the existence of the Lee spectral sequence from $Kh'\otimes\Q$ to $Kh_{Lee}$, $\rank Kh'^{m^2,*}(T_n^{n-1})$ is bounded below by $\dim Kh_{Lee}^{m^2}(T_n^{n-1})$, which equals $\binom{2m}{m}=2\binom{n-1}{m}$: $Kh_{Lee}^{m^2}(T_n^{n-1})$ is generated by those generators $[s_\mathfrak{o}]$ for which $\mathfrak{o}$ is an orientation of $T_n^{n-1}$ realizing $T(2;m,m)$ (note components in $T_n^{n-1}$ have pairwise linking number $1/2$). We conclude that $T_n^{n-1}$ satisfies \eqref{eq:rank}, and so do all $T_n^i$, because the sharpness of the estimate above shows all \eqref{eq:T_n^i}\eqref{eq:T_n^n-1} are in fact equalities.

The sharpness of estimate also implies that the map $Kh'(T_{n-2}^{n-3}\sqcup U_0)\to Kh'(T_n^{n-1})$ in the exact triangle \eqref{eq:LES_T} is injective upon tensoring $\Q$. It follows that $$\inf\{q\colon Kh'^{m^2,q}(T_n^{n-1})\ne0\}\le\inf\{q\colon Kh'^{(m-1)^2,q}(T_{n-2}^{n-3})\ne0\}-1+3n-4=3m^2-2m.$$ Lemma~\ref{lem:bound}(1) gives the reverse inequality, so \eqref{eq:low} is also proved.

We return to the calculation of the $s$-invariant. The sharpness of the estimate of $\rank Kh'(T_n^{n-1})$ also implies that the Lee spectral sequence from $Kh'(T_n^{n-1})\otimes\Q$ to $Kh_{Lee}(T_n^{n-1})$ collapses immediately at homological degree $h=m^2$. It follows that the lowest quantum filtration level of $Kh_{Lee}^{m^2}(T_n^{n-1})$ is at $q=3m^2-2m$.

Taking into account the bidegree shift $[m^2]\{3m^2\}$, the $s$-invariant of $T(2;m,m)$ is equal to the quantum filtration degree of $[s_\mathfrak{o}]\in Kh_{Lee}^{m^2}(T_n^{n-1})$ minus $3m^2-1$, where $\mathfrak{o}$ is any orientation of $T_n^{n-1}$ that realizes $T(2;m,m)$. Since $Kh_{Lee}^{m^2}(T_n^{n-1})$ is spanned by all such $[s_\mathfrak{o}]$, every $[s_\mathfrak{o}]$ sits in the lowest filtration level. It follows that $s(T(2;m,m))=(3m^2-2m)-(3m^2-1)=-2m+1$, proving Proposition~\ref{prop:s} for $|p-q|=0$.

\textbf{Case 2}: $|p-q|=2$.

Write $n=p+q=2m$. By an induction on $m$ one can show that $$\rank Kh'^{m^2-1,*}(T_n^i)=2\binom{i}{m+1}+2\binom{i}{m-1},$$ $$\inf\{q\colon Kh'^{m^2-1,q}(T_n^{n-1})\ne0\}=3m^2-2m-1.$$
Moreover, we have $\rank Kh'^{m^2-1,*}(T_n^{n-1})=\dim Kh_{Lee}(T_n^{n-1})$, which implies the collapsing of the Lee spectral sequence at $h=m^2-1$. After a bidegree shift $[m^2-1]\{3m^2-3\}$, we calculate that $s(T(2;m+1,m-1))=(3m^2-2m-1)-(3m^2-3-1)=-2m+3$, proving the case $|p-q|=2$.

\textbf{Case 3}: $|p-q|=1$.

Write $n=p+q=2m+1$. By an induction on $m$ (with base case $m=0$) one can show that $$\rank Kh'^{m^2+m,*}(T_n^i)=2\binom{i+1}{m+1},$$ $$\inf\{q\colon Kh'^{m^2+m,q}(T_n^{n-1})\ne0\}=3m^2+m-1.$$
Moreover, we also conclude the immediate collapsing of the Lee spectral sequence by a dimension count, and calculate that $s(T(2;m+1,m))=(3m^2+m-1)-(3m^2+3m-1)=-2m$.

\textbf{Case 4}: $|p-q|=3$.

Write $n=p+q=2m+1$. From $$\rank Kh'^{m^2+m-2,*}(T_n^i)=2\binom{i}{m+2}+2\binom{i}{m-1}$$ $$\inf\{q\colon Kh'^{m^2+m-2,q}(T_n^{n-1})\ne0\}=3m^2+m-3\ \ (m>0)$$ and a dimension count we conclude as above that $s(T(2;m+2,m-1))=-2m+4$. We remark that in this case one need to take both $m=0,1$ as base cases for induction, where $T_3^0=S_1^0=T_1^0=U_0$ and all $Kh'(T_3^i)$ can be completely determined from \eqref{eq:LES_T}.
\end{proof}

\subsection{A question}\label{sec:s_que}
As an analogue to Question~6.1 in \cite{ren2023lee}, we pose the following question, whose truth is verified in small examples ($n\le5$).
\begin{Que}\label{que}
Is it true that the saddle cobordism $T_n^i\to T_n^{i-1}$ always induces a surjection on $Kh'\otimes\Q$?
\end{Que}\vspace{-5pt}
A positive answer of Question~\ref{que} in the case $i=n-1$ implies the saddle cobordism $T(2;n-2,0)\sqcup U_0\to T(2;n,0)$ is injective in $Kh'\otimes\Q$. By the same argument as the proof of Theorem~1.1 ($m=n$) in \cite{ren2023lee}, this implies \eqref{eq:s_2}, thus the conjectural genus bound \eqref{eq:g_D2}. Of course, \eqref{eq:s_2} is a much weaker statement than Question~\ref{que} and would follow from the surjectivity of $$Kh'^{pq,pq+\lfloor n^2/2\rfloor-n}(T_n^{n-1})\otimes\Q\to Kh'^{pq,pq+\lfloor n^2/2\rfloor-n-1}(T_n^{n-2})\otimes\Q$$ for all $p+q=n$ (c.f. Remark~\ref{rmk:fail}).

\printbibliography

\end{document}